  \theoremstyle{plain}
    \newtheorem{thm}{Theorem}[section]
   \newtheorem{lemma}[thm]{Lemma}
    \newtheorem{corollary}[thm]{Corollary}
    \newtheorem{subsec}[thm]{}
\theoremstyle{definition}
    \newtheorem{defn}[thm]{Definition}
\theoremstyle{remark}
     \newtheorem{remark}[thm]{Remark}
\title{}
\author{}
\date{}
\begin{document}
\title{Some special property of Farey sequence}

\author{Ripan Saha}
\email{ripanjumaths@gmail.com}
\address{Department of Mathematics,
Raiganj University, Raiganj, 733134,
West Bengal, India.}

\date{\today}
\subjclass[2010]{$11$Axx, $11$Bxx.}
\keywords{Farey sequence, palindrom, Euler\rq s Totient function.}

\thispagestyle{empty}

\begin{abstract}
We discuss some special property of the Farey sequence. We show in each term of the Farey sequence, ratio of the sum of elements in the denominator and the sum of elements in the numerator is exactly two. We also show that the Farey sequence contains a palindromic structure.
\end{abstract}
\maketitle
\section{Introduction} 
Farey sequence is named after British geologist John Farey, Sr., who publised a result in  Philosophical Magazine in 1816 about these sequence without giving a proof. Later, Cauchy proved the result conjectured by Farey. Though,  Charles Haros proved similar result in 1802 which were not known to Farey and Cauchy. Later, Farey sequence appeared in many different areas of mathematics, including number theory, topology, geometry, see, \cite{4},\cite{2},\cite{0}. Farey sequence is also related to Ford’s circle and Riemann hypothesis,\cite{5}.

Farey sequence \cite{4}, $F_n$ in the interval $[0,1]$ is defined as sequence of ascending rational numbers in reduced form starting at $\frac{0}{1}$ and ending at $\frac{1}{1}$ such that the elements in the $n$th term, denominator
are less than or equal to $n$. First few terms of Farey sequence are listed as follows,
\begin{align*}
&F_1=\frac{0}{1}, \frac{1}{1},\\
&F_2=\frac{0}{1}, \frac{1}{2}, \frac{1}{1},\\
&F_3=\frac{0}{1}, \frac{1}{3}, \frac{1}{2}, \frac{2}{3}, \frac{1}{1},\\
&F_4=\frac{0}{1}, \frac{1}{4}, \frac{1}{3}, \frac{1}{2}, \frac{2}{3}, \frac{3}{4}, \frac{1}{1},\\
&F_5=\frac{0}{1}, \frac{1}{5}, \frac{1}{4}, \frac{1}{3}, \frac{2}{5}, \frac{1}{2}, \frac{3}{5}, \frac{2}{3}, \frac{3}{4}, \frac{4}{5}, \frac{1}{1},\\
&\cdots
\end{align*}
Suppose $\frac{a}{b}$ and $\frac{c}{d}$ are two fractions in Farey sequence. The mediant or sum of $\frac{a}{b}$ and $\frac{c}{d}$ is defined as
$$\frac{a}{b}\oplus\frac{c}{d}=\frac{a+c}{b+d}$$
If $\frac{a}{b}<\frac{c}{d}$ then $\frac{a}{b}<\frac{a+c}{b+d}<\frac{c}{d}$, see\cite{0}.

For all the terms of the Farey sequence given above, one can note that sum of the elements in the denominator is always two times of the sum of the elements in the numerator. In $F_1$, sum of numerator is $1$ and sum of denominator is $2$. In $F_2$, sum of numerator is $2$ and sum of denominator is $4$. Likewise in $F_3,F_4,F_5$, sum of denominator is always two times the sum of numerator. So, one may ask if this result holds for all $F_n$ or not. In this paper, we will show that in general this result is true. Surprisingly, no one has noticed this special property of Farey sequence. In the second section of this article, we introduce this special property of Farey sequence. In the final section, we show that  denominators of each fraction in  $F_n$ has a palindromic structure for all $n$.
\section{A special property of Farey sequence}
In this section, we prove our first result on Farey sequence.
\begin{lemma}\label{lemma1}
\cite{0} The length of each term of Farey sequence $F_n$ is given by the following recurring formula:
$$|F_n|=|F_{n-1}|+\phi(n).$$
Where $\phi$ is the Euler\rq s Totient function, which is number of elements less than and co-prime to $n$.
\end{lemma}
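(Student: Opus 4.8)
The plan is to compare the fractions occurring in $F_{n-1}$ with those in $F_n$ directly, and to show that the difference is governed exactly by the totient count. First I would recall the defining description: $F_n$ is the increasing list of reduced fractions $\frac{p}{q}$ in $[0,1]$ with $1 \le q \le n$. Since any fraction whose denominator is at most $n-1$ automatically has denominator at most $n$, every term of $F_{n-1}$ is also a term of $F_n$, so $F_{n-1} \subseteq F_n$ as sets. Consequently $|F_n| = |F_{n-1}| + \#\{\,\text{terms of } F_n \text{ not already in } F_{n-1}\,\}$, and the whole problem reduces to counting the newly appearing terms.

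Next I would identify precisely which fractions are new. A reduced fraction $\frac{p}{q}$ lies in $F_n$ but not in $F_{n-1}$ exactly when $q \le n$ but $q \not\le n-1$, i.e.\ when its reduced denominator is exactly $n$. Hence the new terms are precisely the reduced fractions of the form $\frac{p}{n}$ lying in $[0,1]$, which are those integers $p$ with $0 \le p \le n$ and $\gcd(p,n)=1$.

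Then I would count these. Fix $n \ge 2$. The endpoints $\frac{0}{1}$ and $\frac{1}{1}$ already appear in $F_{n-1}$, and neither is written with denominator $n$ in reduced form, so the cases $p=0$ and $p=n$ contribute nothing new. The new terms therefore correspond bijectively to the integers $p$ with $1 \le p \le n-1$ and $\gcd(p,n)=1$. By the definition of Euler's totient function recalled in the statement, the number of such $p$ is exactly $\phi(n)$, which yields $|F_n| = |F_{n-1}| + \phi(n)$.

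The only real obstacle here is the boundary bookkeeping: one must check that for $n \ge 2$ neither endpoint is miscounted, and that the reduced-form requirement $\gcd(p,n)=1$ matches the totient count so that no new fraction is omitted and none is counted twice. Everything else is a direct set comparison, so once this counting is pinned down the recurrence follows immediately.
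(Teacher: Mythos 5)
Your proof is correct. There is nothing in the paper to compare it against line by line: the paper states this lemma with a citation to \cite{0} and supplies no proof of its own, only recording its content in Remark \ref{rmk1} (the new terms of $F_n$ are exactly those with denominator $n$ and numerator coprime to $n$). Your argument---$F_{n-1}\subseteq F_n$ as sets, the new terms are precisely the reduced fractions $\frac{p}{n}$ with $1\le p\le n-1$ and $\gcd(p,n)=1$, and there are $\phi(n)$ of these by definition of the totient---is the standard direct justification of exactly that content, and it is what the rest of the paper implicitly relies on. One small simplification: for $n\ge 2$ the endpoint cases $p=0$ and $p=n$ are already excluded by the coprimality condition itself, since $\gcd(0,n)=\gcd(n,n)=n>1$, so the boundary bookkeeping you single out as the main obstacle takes care of itself automatically.
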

\begin{remark}\label{rmk1}
From the Lemma \ref{lemma1}, we get the information that number of new elements appears in $F_n$ compare to $F_{n-1}$ is $\phi(n)$. The numbers $1\leq k<n$ which are co-prime to $n$, appears to the numerator in the new terms of $F_n$ and the denominator of each new terms in $F_n$ is always $n$.
\end{remark}
\begin{lemma}\cite{1}\label{lemma2}
For any positive intergers $n$ and $k$,
$$\sum_{\text{gcd}(n,k)=1}k=\frac{n\phi(n)}{2}.$$
\end{lemma}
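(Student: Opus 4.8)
The plan is to prove the identity by exploiting the reflection symmetry of the set of residues coprime to $n$ under the map $k \mapsto n-k$. Write $S=\sum_{\gcd(n,k)=1}k$, where the sum ranges over the integers $k$ with $1\le k\le n-1$ that are coprime to $n$ (for $n\ge 2$; the case $n=1$ being trivial). The crucial elementary observation is that $\gcd(n-k,n)=\gcd(k,n)$, so $k$ is coprime to $n$ if and only if $n-k$ is coprime to $n$. Hence $k\mapsto n-k$ is a bijection (indeed an involution) of the index set onto itself.

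The main computation is then a two-line reindexing. Applying the substitution $k\mapsto n-k$ to the defining sum leaves the index set unchanged, so
$$S=\sum_{\gcd(n,k)=1}(n-k)=\left(\sum_{\gcd(n,k)=1}n\right)-\sum_{\gcd(n,k)=1}k.$$
The first sum on the right has exactly $\phi(n)$ terms, each equal to $n$, by the very definition of Euler's totient function, while the second sum is again $S$. Therefore $S=n\phi(n)-S$, which rearranges to $2S=n\phi(n)$ and hence $S=\tfrac{n\phi(n)}{2}$.

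The only point requiring genuine care is the behaviour of the involution at a possible fixed point, namely an index with $k=n-k$, i.e. $k=n/2$. Such a $k$ could only arise for even $n$, but then $\gcd(n/2,n)=n/2>1$ whenever $n>2$, so it is never coprime to $n$ and never appears in the sum; the degenerate value $n=2$ (where $k=1$ is its own reflection) is checked directly and satisfies the formula. Because the clean algebraic identity $S=n\phi(n)-S$ is derived from the \emph{whole} sum at once rather than from an explicit pairing, this fixed-point subtlety does not actually interfere with the argument, and the edge cases $n=1,2$ can be dispatched by direct substitution. I expect this bookkeeping of small cases, rather than any structural difficulty, to be the only obstacle.
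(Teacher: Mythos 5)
Your proof is correct, and it rests on the same key observation as the paper's: $\gcd(n-k,n)=\gcd(k,n)$, so $k\mapsto n-k$ is an involution of the index set. The execution, however, differs in a small but genuinely useful way. The paper pairs each $k$ explicitly with $n-k$, asserts that $k\neq n-k$ (on the grounds that otherwise $\gcd(k,n)$ could not be $1$), and then counts $\phi(n)/2$ pairs each summing to $n$. You instead reindex the whole sum at once, obtaining $S=n\phi(n)-S$ and hence $S=\tfrac{n\phi(n)}{2}$, without ever needing the orbits of the involution to be honest two-element pairs. This buys concrete robustness at the edge: for $n=2$ the paper's assertion fails ($k=1=n-k$ and yet $\gcd(1,2)=1$, and $\phi(2)=1$ is odd, so a partition into pairs is impossible), whereas your identity $2S=n\phi(n)$ holds there verbatim; your own discussion of the fixed point is exactly the right diagnosis of why. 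One quibble: the case $n=1$ is not in fact ``dispatched by direct substitution'' --- with $\phi(1)=1$ the right-hand side is $\tfrac{1}{2}$, which no sum of integers can equal, so the identity simply fails there; but this is a defect of the lemma's loose statement (shared by the paper), not of your argument, which is complete for all $n\ge 2$.
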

\begin{proof}
If $k$ is co-prime to $n$, then $n-k$ is also co-prime to $n$. Note that, $k$ can not be equal to $n-k$, otherwise, $\text{gcd}(k,n)$ will not be $1$. The number of elements co-prime to $n$ is $\phi(n)$. So pairing $k$ and $n-k$, we get the total sum is $\frac{n\phi(n)}
{2}$ .
\end{proof}
\begin{thm}
In each term of the Farey sequence, the sum of the elements in the denominator is always two times of the sum of  elements in the numerator.
\end{thm}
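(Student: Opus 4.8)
The plan is to argue by induction on $n$, with Remark \ref{rmk1} and Lemma \ref{lemma2} doing essentially all of the work. Write $N_n$ for the sum of all numerators appearing in $F_n$ and $D_n$ for the sum of all denominators; the assertion is precisely that $D_n = 2N_n$ for every $n$. First I would dispose of the base case by direct inspection: for $F_1 = \frac{0}{1},\frac{1}{1}$ one has $N_1 = 1$ and $D_1 = 2$, so indeed $D_1 = 2N_1$.

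For the inductive step, suppose $D_{n-1} = 2N_{n-1}$. By Remark \ref{rmk1} the fractions of $F_n$ that do not already occur in $F_{n-1}$ are exactly those of the form $\frac{k}{n}$ with $1 \le k < n$ and $\gcd(k,n) = 1$, and there are $\phi(n)$ of them. Consequently the numerators of these new fractions contribute
$$\sum_{\gcd(k,n)=1} k = \frac{n\phi(n)}{2}$$
to the numerator sum, by Lemma \ref{lemma2}, while each of the $\phi(n)$ new denominators equals $n$ and so they contribute $n\phi(n)$ to the denominator sum. This gives the recurrences
$$N_n = N_{n-1} + \frac{n\phi(n)}{2}, \qquad D_n = D_{n-1} + n\phi(n).$$

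Substituting the inductive hypothesis into the second recurrence then yields
$$D_n = 2N_{n-1} + n\phi(n) = 2\Bigl(N_{n-1} + \tfrac{n\phi(n)}{2}\Bigr) = 2N_n,$$
which closes the induction. I do not expect a genuine obstacle here: the decisive point is simply that Lemma \ref{lemma2} delivers exactly $\frac{n\phi(n)}{2}$ for the sum of the new numerators, which is precisely half of the sum $n\phi(n)$ of the new denominators, so the ratio $2$ is preserved at every stage. The only matters requiring care are bookkeeping ones — confirming that the range of summation in Lemma \ref{lemma2} is $1 \le k < n$ (equivalently $1 \le k \le n$, since $\gcd(n,n)\neq 1$ for $n \ge 2$, so that the shared endpoints $\frac{0}{1}$ and $\frac{1}{1}$ are not recounted), and verifying the base case so that the induction is well-founded.
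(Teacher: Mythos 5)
Your proof is correct and follows essentially the same route as the paper's: induction on $n$, using Remark \ref{rmk1} to identify the new fractions $\frac{k}{n}$ and Lemma \ref{lemma2} to compute their numerator sum as $\frac{n\phi(n)}{2}$, then substituting the inductive hypothesis $D_{n-1}=2N_{n-1}$ into the denominator recurrence. Your added remark about the summation range and the endpoints $\frac{0}{1}$, $\frac{1}{1}$ not being recounted is a careful touch the paper leaves implicit, but it is not a different argument.
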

\begin{proof}
We prove this result by induction on $n$. Let $N_n$ denotes the sum of the elements in the numerator of the $n$th term of Farey sequence, that is, in $F_n$ and $D_n$ denotes the sum of  elements in the
denominator of $n$th term of Farey sequence.
For $n = 1$,
$$N_1=1\,\,\, \text{and}\,\,\, D_1=2$$
So, our claim is true for $n = 1$.
Now suppose result is true for $n-1$.
We show that result is true for all $n$,
we have,
\begin{align*}
&N_n=N_{n-1}+\sum_{\text{gcd}(n,k)=1}k,\,\,\,\text{using remark}\,\ref{rmk1}.\\
&\hspace{1.5cm}=N_{n-1}+\frac{n\phi(n)}{2},\,\,\,\text{using lemma}\,\ref{lemma2}.\\
&D_n=D_{n-1}+n\phi(n),\,\,\,\text{using remark}\,\ref{rmk1}.\\
&\hspace{1.5cm}=2N_{n-1}+n\phi(n),\,\,\,\text{using induction}.\\
&\hspace{1.5cm}=2\Big(N_{n-1}+\frac{n\phi(n)}{2}\Big).\\
&\hspace{1.5cm}=2N_n.
\end{align*}
So, by induction our result follows.
\end{proof}
\section{Palindromic structure in Farey sequence}
In this final section, we show that there is a palindromic structure in denominators of $F_n$ for all $n$.
\begin{defn}
A word or sequence is called a Palindrom if it reads same from forward and backward direction.
\end{defn}
For example, the english word BOB, MADAM and numbers 1001,12321 are examples of palindroms.
\begin{defn}
Two fractions $\frac{a}{b}$ and $\frac{c}{d}$ in $F_n$ are called Farey neighbours if $\frac{a}{b}<\frac{c}{d}$ and there is no fraction in $F_n$ between them.
\end{defn}
The following lemma gives a condition for two fraction to be Farey neighbour.
\begin{lemma}\label{lemma3}
\cite{0} Let $\frac{0}{1}\leq \frac{a}{b}<\frac{c}{d}\leq \frac{1}{1}$ then $\frac{a}{b},\frac{c}{d}\in F_n$ are neighbours in the Farey sequence if and only if $bc-ad=1.$
\end{lemma}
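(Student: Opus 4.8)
The plan is to base the whole argument on a single determinant identity together with the mediant betweenness inequality already recorded in the introduction. First I would note that for reduced fractions $\frac{a}{b}<\frac{c}{d}$ the integer $bc-ad$ is positive, and that for \emph{any} fraction $\frac{p}{q}$ with $\frac{a}{b}<\frac{p}{q}<\frac{c}{d}$ both $pb-qa$ and $qc-pd$ are positive integers. The key computation is the identity
\begin{equation}
q(bc-ad)=b(qc-pd)+d(pb-qa),
\end{equation}
which I would verify by direct expansion. When $bc-ad=1$ this forces $q\ge b+d$, so every fraction lying strictly between $\frac{a}{b}$ and $\frac{c}{d}$ has denominator at least $b+d$; moreover the mediant $\frac{a+c}{b+d}$ realises this minimal denominator and is automatically in reduced form, since any common divisor of $a+c$ and $b+d$ divides $b(a+c)-a(b+d)=bc-ad=1$.

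For the implication $bc-ad=1\Rightarrow$ neighbours, the bound just obtained shows that every fraction strictly between $\frac{a}{b}$ and $\frac{c}{d}$ has denominator at least $b+d$. Hence, once $b+d>n$, no such fraction can belong to $F_n$, and since $\frac{a}{b},\frac{c}{d}\in F_n$ there is then nothing between them in $F_n$; that is exactly the assertion that they are neighbours.

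For the converse, suppose $\frac{a}{b}<\frac{c}{d}$ are neighbours in $F_n$. Using $\gcd(a,b)=1$ I would solve the Bezout equation $bx-ay=1$ in integers, and then, by adding a suitable multiple of $(a,b)$ to $(x,y)$, arrange that the denominator satisfies $n-b<y\le n$. The resulting fraction $\frac{x}{y}$ is reduced (a common divisor of $x,y$ divides $1$), lies in $F_n$, and satisfies $\frac{x}{y}>\frac{a}{b}$ because $\frac{x}{y}-\frac{a}{b}=\frac{bx-ay}{by}=\frac{1}{by}>0$. By the determinant bound, nothing in $F_n$ separates $\frac{a}{b}$ from $\frac{x}{y}$, so $\frac{x}{y}$ is the immediate right neighbour of $\frac{a}{b}$. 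Since that right neighbour is unique and $\frac{c}{d}$ is assumed to be it, I conclude $\frac{x}{y}=\frac{c}{d}$ and therefore $bc-ad=bx-ay=1$.

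The main obstacle I anticipate is the $\Leftarrow$ direction rather than any single calculation: the identity only yields $q\ge b+d$, so the mediant $\frac{a+c}{b+d}$ genuinely sits between the two fractions whenever $b+d\le n$, and in that case $\frac{a}{b}$ and $\frac{c}{d}$ fail to be neighbours despite $bc-ad=1$ (for example $\frac{0}{1}$ and $\frac{1}{1}$ in $F_3$). The delicate point is therefore the boundary behaviour in the denominator range $n-b<y\le n$ secured by the Bezout adjustment; once that window is located correctly, the remainder is the routine verification of the displayed identity.
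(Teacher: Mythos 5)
The paper itself contains no proof of this lemma: it is imported from \cite{0} and used as a black box, so there is nothing internal to compare your argument with --- you are supplying a proof the paper omits. What you give is the classical Hardy--Wright argument, and its core is sound: the identity $q(bc-ad)=b(qc-pd)+d(pb-qa)$ (which expands correctly) gives the bound $q\ge b+d$ on the denominator of any fraction strictly between $\frac{a}{b}$ and $\frac{c}{d}$ once $bc-ad=1$; and the Bezout solution of $bx-ay=1$, shifted by a multiple of $(a,b)$ into the window $n-b<y\le n$, produces a reduced fraction $\frac{x}{y}\in F_n$ with nothing of $F_n$ strictly between $\frac{a}{b}$ and $\frac{x}{y}$ (because $b+y>n$), which forces $\frac{x}{y}=\frac{c}{d}$ and hence $bc-ad=1$ for neighbours.

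Two caveats, the first of which is really a defect of the paper's statement that you correctly exposed rather than a flaw in your reasoning. As your final paragraph observes, the ``if'' direction is false as literally stated: $\frac{0}{1},\frac{1}{1}\in F_3$ satisfy $bc-ad=1$ yet are not neighbours there. Your proof of that direction is therefore (necessarily) a proof of the corrected statement carrying the extra hypothesis $b+d>n$. This correction is harmless for the paper: in its later uses of the lemma (Corollary \ref{cor1} and the palindrome theorem) either one of the two denominators equals $n$, or the fractions are reflections of a neighbouring pair in $F_n$, whose denominators must sum past $n$ (otherwise their mediant would separate them), so the hypothesis $b+d>n$ is always available. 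The second caveat is a small genuine gap in your ``only if'' direction: membership $\frac{x}{y}\in F_n$ requires $\frac{x}{y}\le 1$ under the paper's definition, and you never check it. It does follow from what you already have: if $\frac{x}{y}>1\ge\frac{c}{d}$, then $\frac{c}{d}$ would lie strictly between $\frac{a}{b}$ and $\frac{x}{y}$, contradicting your denominator bound because $d\le n<b+y$. One added line closes this.
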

\begin{corollary}\label{cor1}
If $\frac{0}{1}\leq\cdots<\frac{a_1}{b_1}<\frac{a_2}{b_2}<\cdots\leq\frac{1}{2}$ are Farey neighbours, then $\frac{1}{2}\leq \cdots
<\frac{b_2-a_2}{b_2}<\frac{b_1-a_1}{b_1}<\cdots\leq\frac{1}{1}$ are also Farey neighbours.
\end{corollary}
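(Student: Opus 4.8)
The plan is to realize the symmetry of the Farey sequence about $\frac12$ by the reflection $x\mapsto 1-x$, which on fractions is the map $\frac{a}{b}\mapsto\frac{b-a}{b}$, and then to verify the neighbour condition of Lemma \ref{lemma3} under this map. First I would record three elementary facts. (i) Since $\gcd(a,b)=1$ forces $\gcd(b-a,b)=\gcd(a,b)=1$, the fraction $\frac{b-a}{b}$ is again in lowest terms with the same denominator $b\le n$; hence $\frac{b-a}{b}\in F_n$ whenever $\frac{a}{b}\in F_n$, so the map preserves membership in $F_n$. (ii) The map is order-reversing: from $\frac{a_1}{b_1}<\frac{a_2}{b_2}$ we get $1-\frac{a_1}{b_1}>1-\frac{a_2}{b_2}$, i.e. $\frac{b_2-a_2}{b_2}<\frac{b_1-a_1}{b_1}$, which both explains the reversal of the listed order and shows that the interval $[0,\tfrac12]$ is carried onto $[\tfrac12,1]$.

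The heart of the argument is a single determinant computation. Suppose $\frac{a_1}{b_1}<\frac{a_2}{b_2}$ are consecutive Farey neighbours, so by Lemma \ref{lemma3} we have $b_1a_2-a_1b_2=1$. After reflection the corresponding pair, written in increasing order, is $\frac{b_2-a_2}{b_2}<\frac{b_1-a_1}{b_1}$, and to invoke Lemma \ref{lemma3} I must check that the analogous determinant equals $1$. Expanding gives $b_2(b_1-a_1)-(b_2-a_2)b_1=a_2b_1-a_1b_2$, which is exactly the quantity equal to $1$ by hypothesis. Thus each reflected consecutive pair again satisfies the neighbour criterion.

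Finally I would assemble these observations: the reflection sends the increasing chain from $\frac01$ to $\frac12$ bijectively onto a chain running from $\frac12$ to $\frac11$, reversing the order (so consecutivity is preserved) and respecting membership in $F_n$; combined with the determinant identity and Lemma \ref{lemma3}, every consecutive pair in the image is a Farey neighbour, which is the assertion. I expect the only delicate point to be the bookkeeping of orientation — keeping straight which reflected fraction is the smaller one, so that the determinant in Lemma \ref{lemma3} comes out to $+1$ rather than $-1$; once the order is fixed correctly the computation is immediate.
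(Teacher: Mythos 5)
Your proposal is correct and follows essentially the same route as the paper: both apply Lemma \ref{lemma3} to the reflected pair and reduce the neighbour condition to the identity $b_2(b_1-a_1)-b_1(b_2-a_2)=b_1a_2-a_1b_2=1$, together with the observation that reflection about $\tfrac{1}{2}$ reverses order and carries $[0,\tfrac12]$ onto $[\tfrac12,1]$. Your write-up is in fact slightly more careful than the paper's, since you also verify explicitly that $\gcd(b-a,b)=1$, so the reflected fractions genuinely lie in $F_n$ — a hypothesis Lemma \ref{lemma3} needs and the paper leaves implicit.
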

\begin{proof}
Using lemma \ref{lemma3}, on the given first Farey neighbours, we have,
\begin{align}
b_1a_2-a_1b_2=1.
\end{align}
So, $b_2(b_1-a_1)-b_1(b_2-a_2)=(b_1a_2-a_1b_2)=1.$

Now, $\frac{b_1-a_1}{b_1}=1-\frac{a_1}{b_1}\geq \frac{1}{2}$ as $\frac{0}{1}\leq\frac{a_1}{b_1}\leq \frac{1}{2}$. Similarly, $\frac{1}{2}\leq\frac{b_2-a_2}{b_2}\leq \frac{1}{1}$. Thus, our corollary follows.
\end{proof}
\begin{thm}
Denominators of each fraction in $F_n$ for all $n$ in a Farey sequence is a palindrom.
\end{thm}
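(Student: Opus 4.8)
The plan is to exploit the reflection symmetry of $F_n$ about the midpoint $\frac{1}{2}$. Writing the $n$th term of the Farey sequence as an increasing list $\frac{a_0}{b_0}<\frac{a_1}{b_1}<\cdots<\frac{a_m}{b_m}$ with $m=|F_n|-1$, the goal is to prove that the denominator string $(b_0,b_1,\dots,b_m)$ satisfies $b_i=b_{m-i}$ for every $i$, which is precisely the assertion that it is a palindrome.

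The key observation is that the map $\sigma\colon x\mapsto 1-x$ sends a reduced fraction $\frac{a}{b}$ to $\frac{b-a}{b}$, and that the latter is again reduced with the same denominator $b$, since $\gcd(b-a,b)=\gcd(a,b)=1$. First I would verify that $\sigma$ restricts to a bijection of $F_n$ onto itself: it carries $[0,1]$ to $[0,1]$, preserves the bound $b\le n$ on the denominators, and is its own inverse. Second, since $\sigma$ is strictly decreasing, it reverses the order of the list, so it must send the $i$-th smallest element $\frac{a_i}{b_i}$ to the $i$-th largest element $\frac{a_{m-i}}{b_{m-i}}$. Because $\sigma$ leaves the denominator unchanged, comparing the denominators of $\sigma(\frac{a_i}{b_i})$ and $\frac{a_{m-i}}{b_{m-i}}$ gives $b_i=b_{m-i}$, which completes the argument.

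To present this within the framework already developed, I would route the order-reversal through Corollary \ref{cor1}: the Farey neighbours lying in $[0,\frac{1}{2}]$ are carried by $\frac{a}{b}\mapsto\frac{b-a}{b}$ to Farey neighbours in $[\frac{1}{2},1]$ with identical denominators but in reversed order, with $\frac{1}{2}$ (present for all $n\ge 2$) serving as the fixed center of the palindrome. The case $n=1$, where $F_1=\frac{0}{1},\frac{1}{1}$ has the trivially palindromic denominator string $(1,1)$, is checked directly.

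The substantive point — and the only place any care is needed — is establishing that $\sigma$ is genuinely an order-reversing bijection of $F_n$ that fixes denominators; once that is in hand the palindrome property is immediate. In particular I would make sure the two halves about $\frac{1}{2}$ glue correctly and that the unique fixed point $\frac{1}{2}$ is not double-counted, so that the indices line up as $b_i=b_{m-i}$ across the entire sequence rather than only within each half.
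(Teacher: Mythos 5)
Your proof is correct, and it takes a genuinely different route from the paper. The paper argues by induction on $n$: it writes $F_{n-1}$ in mirror-symmetric form (via its Corollary \ref{cor1}), notes that each new fraction of $F_n$ arises as a mediant $\frac{k}{n}$ inserted between two neighbours $\frac{a_1}{b_1}<\frac{a_2}{b_2}$ in the left half, and then checks with the criterion $bc-ad=1$ of Lemma \ref{lemma3} that the reflected fraction $\frac{n-k}{n}$ is simultaneously inserted in the mirror position in the right half, so the palindrome propagates. Your argument is non-inductive and more economical: the involution $\sigma(x)=1-x$ maps a reduced fraction $\frac{a}{b}$ to the reduced fraction $\frac{b-a}{b}$ with the same denominator, hence restricts to a bijection of the finite set $F_n$ onto itself; being strictly decreasing, it must send the $i$-th smallest element to the $i$-th largest, and since it fixes denominators this gives $b_i=b_{m-i}$ outright. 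This buys several things: you never need Lemma \ref{lemma3}, Corollary \ref{cor1}, or the fact that new terms of $F_n$ are mediants of their $F_{n-1}$-neighbours --- a nontrivial fact that the paper's proof invokes implicitly without justification when it writes the inserted term as $\frac{a_1+a_2}{b_1+b_2}$ --- and you actually prove the stronger statement that $\frac{a_{m-i}}{b_{m-i}}=1-\frac{a_i}{b_i}$, i.e.\ full reflection symmetry of $F_n$, of which the denominator palindrome is a corollary. One remark: your closing suggestion to ``route the order-reversal through Corollary \ref{cor1}'' is unnecessary and would only reintroduce the case analysis about gluing at $\frac{1}{2}$ that your set-bijection argument elegantly avoids; the fixed point $\frac12$ and the index alignment take care of themselves, since a decreasing bijection of a finite totally ordered set onto itself automatically pairs the $i$-th element with the $(m-i)$-th. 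The paper's inductive proof, for its part, has the virtue of displaying how the symmetric structure is built up mediant by mediant, which connects the palindrome to the counting statement $|F_n|=|F_{n-1}|+\phi(n)$ used elsewhere in the paper, but as a proof of the stated theorem yours is both shorter and more self-contained.
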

\begin{proof}
We prove this theorem by induction.

In $F_1$, denominators are 1,1, which is a palindromic sequence.

In $F_2$, denominators are 1,2,1, which is also a palindromic sequence.

Now, suppose that denominators in $F_{n-1}$ are in a palindromic sequence. We need to show that denominators in $F_n$ is also in a palindromic sequence.
Using corollary \ref{cor1} and palindromic structure of denominators in $F_{n-1}$, we can write $F_{n-1}$ in the following form,
$$F_{n-1}=\lbrace\frac{0}{1},\cdots,\frac{a_1}{b_1},\frac{a_2}{b_2},\cdots,\frac{1}{2},\cdots,\frac{b_2-a_2}{b_2},\frac{b_1-a_1}{b_1},\cdots,\frac{1}{1}\rbrace.$$
Suppose in the next term $F_n$, a new term appears between $\frac{a_1}{b_1}$ and $\frac{a_2}{b_2}$. So, we can write $F_n$ from $F_{n-1}$ as,
\begin{align*}
\lbrace \frac{0}{1},\cdots,\frac{a_1}{b_1},\frac{a_1+a_2}{b_1+b_2},\frac{a_2}{b_2},\cdots,\frac{1}{2},\cdots,\frac{b_2-a_2}{b_2},\frac{(b_2-a_2)+(b_1-a_1)}{b_2+b_1},\frac{b_1-a_1}{b_1},\cdots,\frac{1}{1}\rbrace
\end{align*}
Let $a_1+a_2=k$. We have following relations by using remark \ref{rmk1},
$$b_1+b_2=n\,\,\,\text{and}\,\,\,\text{gcd}(n,k)=1.$$
Now, $(b_1-a_1)+(b_2-a_2)=(b_1+b_2)-(a_1+a_2)=n-k$.  Thus, $F_n$ can be written in terms of $n,k$ as follows:
$$\lbrace \frac{0}{1},\cdots,\frac{a_1}{b_1},\frac{k}{n},\frac{a_2}{b_2},\cdots,\frac{1}{2},\cdots,\frac{b_2-a_2}{b_2},\frac{n-k}{n},\frac{b_1-a_1}{b_1},\cdots,\frac{1}{1}\rbrace.$$
We know that if $\text{gcd}(n,k)=1$ then $\text{gcd}(n,n-k)=1$.  
Now, we need to show $\frac{b_2-a_2}{b_2}<\frac{(b_2-a_2)+(b_1-a_1)}{b_1+b_2}<\frac{b_1-a_1}{b_1}$ are Farey neighbours. So, we need to only check the condition of lemma \ref{lemma3}.  As $\frac{a_1}{b_1}<\frac{k}{n}<\frac{a_2}{b_2}$ are Farey neighbours, we have,
\begin{align}
\label{FNC}kb_1-na_1=1\,\,\,\text{and}\,\,\,na_2-kb_2=1.
\end{align}
Note that,
\begin{align*}
&n(b_1-a_1)-(n-k)b_1=kb_1-na_1=1\\
&(n-k)b_2-n(b_2-a_2)=na_2-kb_2=1.
\end{align*}
So, $\frac{b_2-a_2}{b_2}<\frac{n-k}{n}<\frac{b_1-a_1}{b_1}$ are Farey neighbours. Thus, we have proved that denominators in $F_n$ are palindrom.
Our desired result follows from the induction.
\end{proof}
\begin{remark}
By length of a palindrom, we mean the number of element apearing in the palindrom.
From the lemma \ref{lemma1}, it is clear that palindromic length of denominator palindrom in $F_n$ is $|F_{n-1}|+\phi(n)$. Thus, using Farey sequence one can get very large palindrom numbers.
\end{remark}
\section*{Acknowledgements} 
The author was supported by DST-INSPIRE, Govt. of India, fellowship during this work and would like to thank Sumit Mishra of Emory University for introducing Farey sequence to him.
 
\makeatletter
\renewcommand{\@biblabel}[1]{[#1]\hfill}
\makeatother

\end{document}